\newcommand{\ZZ}{\mathbb{Z}}
\newcommand{\RR}{\mathbb{R}}
\newcommand{\newword}[1]{\textbf{\emph{#1}}}
\DeclareMathOperator{\CvxH}{ConvexHull}
\newcommand{\I}{\mathcal{I}}
\newtheorem{conj}{Conjecture}[section]
\newtheorem{theorem}[conj]{Theorem}
\newtheorem{proposition}[conj]{Proposition}
\newtheorem{lemma}[conj]{Lemma}
\newtheorem{corollary}[conj]{Corollary}
\theoremstyle{definition}
\newtheorem{definition}[conj]{Definition}
\begin{document}
 
\title{Semi-polytope decomposition of a Generalized permutohedron}

\author{SuHo Oh}

%\author{Suho Oh \\
% Applied Mathematics Department \\
% Massachusetts Institute of Technology \\
% \texttt{suho@math.mit.edu} \\
% \\
% Hwanchul Yoo \\
% Mathematics Department \\
% Massachusetts Institute of Technology \\
% \texttt{hcyoo@math.mit.edu} \\ }

%\thanks{suho@math.mit.edu} \and Hwanchul Yoo \thanks{hcyoo@math.mit.edu}}
%\address{Appplied Mathematics Department, Massachusetts Institute of Technology}
%\email{suho@math.mit.edu}

%\address{Mathematics Department, Massachusetts Institute of Technology}
%\email{hcyoo@math.mit.edu}

\date{}
\maketitle

\abstract{In this short note we show explicitly how to decompose a generalized permutohedron into semi-polytopes.}

\section{Introduction}

Given a polytope, assume we have disjoint open cells whose closures sum up to be the entire polytope. A question of naturally assigning each of the remaining points (possibly in multiple closures) to a cell has appeared in \cite{BBY} for studying regular matroids and zonotopes and in \cite{Oh-hvec} for studying h-vectors and $Q$-polytopes. In other words, we are trying to determine ownership of lattice points on boundaries of multiple polytopes. In this note, we study a more general case of doing the same for a \newword{Generalized permutohedron}, a polytope that can be obtained by deforming the usual permutohedron. We will show explicitly how to construct a semi-polytope decomposition of a trimmed generalized permutohedron.

%To be more precise, we try to come up with a subdivision for lattice points of the trimmed version of the original polytope using a subdivision structure of the original polytope.

\section{Generalized permutohedron $P_G$ and its fine mixed subdivision}

Let $\Delta_{[n]} = \CvxH(e_1,\ldots,e_n)$ be the standard coordinate simplex in $\RR^n$. For a subset $I \subset [n]$, let $\Delta_I = \CvxH(e_i | i \in I)$ denote the face of $\Delta_{[n]}$. Let $G \subseteq K_{m,n}$ be a bipartite graph with no isolated vertices. Label the vertices of $G$ by $1,\ldots,m,\bar{1},\ldots,\bar{n}$ and call $1,\ldots,m$ the \newword{left vertices} and $\bar{1},\ldots,\bar{n}$ the right vertices. We identify the barred indices with usual non-barred cases when it is clear we are dealing with the right vertices. For example when we write $\Delta_{\{\bar{1},\bar{3}\}}$ we think of it as $\Delta_{\{1,3\}}$. We associate this graph with the collection $\I_{G}$ of subsets $I_1,\ldots,I_m \subseteq [n]$ such that $j \in I_i$ if and only if $(i,\bar{j})$ is an edge of $G$. Let us define the polytope $P_G(y_1,\ldots,y_m)$ as:
$$P_G(y_1,\ldots,y_m) := y_1 \Delta_{I_1} + \cdots + y_m \Delta_{I_m},$$
where $y_i$ are nonnegative integers. This lies on a hyperplane $\sum_{i \in [n]} x_i = \sum_{j \in [m]} y_j$. An example of a coordinate simplex $\Delta_{[3]}$, a bipartite graph $G$ and a generalized permutohedron $P_G(1,2,3)$ is given in Figure~\ref{fig:ex1}.

\begin{figure}[htbp]
	\begin{center}
	 		\includegraphics[width=0.9\textwidth]{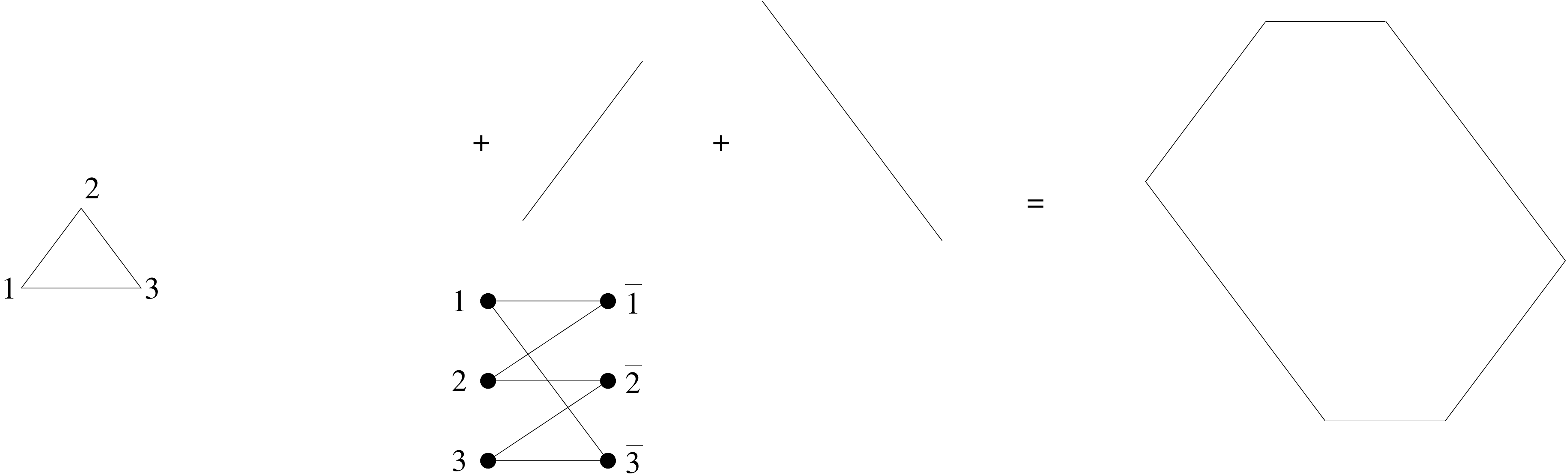}
		\caption{Example of a generalized permutohedron $P_G(1,2,3)$}
		\label{fig:ex1}
	\end{center}
\end{figure}

\begin{definition}[\cite{Postnikov01012009}, Definition 14.1]
Let $d$ be the dimension of the Minkowski sum $P_1 + \cdots + P_m$. A \newword{Minkowski cell} in this sum is a polytope $B_1 + \cdots + B_m$ of dimension $d$ where $B_i$ is the convex hull of some subset of vertices of $P_i$. A \newword{mixed subdivision} of the sum is the decomposition into union of Minkowski cells such that intersection of any two cells is their common face. A mixed subdivision is \newword{fine} if for all cells $B_1 + \cdots + B_m$, all $B_i$ are simplices and $\sum dim B_i = d$.
\end{definition}

All mixed subdivisions in our note, unless otherwise stated, will be referring to fine mixed subdivisions. We will use the word \newword{cell} to denote the Minkowski cells. Beware that our cells are all closed polytopes.

Fine Minkowski cells can be described by spanning trees of $G$. When we are looking at a fixed generalized permutohedron $P_G(y_1,\ldots,y_m)$, we will use $\prod_{J}$ to denote $y_1 \Delta_{J_1} + \cdots + y_m \Delta_{J_m}$ where $J = (J_1,\ldots,J_m)$. We say that $J$ is a tree if the associated bipartite graph is a tree.

%For a sequence of nonempty subsets $J = (J_1,\cdots,J_m)$, we also think of it as a bipartite graph with edges $(i,j')$ for $j \in J_i$. When we are looking at a fixed generalized permutohedron $P_G(y_1,\ldots,y_m)$, we will use $\prod_{J}$ to denote $y_1 \Delta_{J_1} + \cdots + y_m \Delta_{J_m}$. %when $J = (J_1,\ldots,J_m)$.

%let $G_{J}$ be the graph with edges $(i,j')$ for $j \in J_i$. We will identify $\J$ with $G_{\J}$. 

\begin{lemma}[\cite{Postnikov01012009}, Lemma 14.7]
Each fine mixed cell in a mixed subdivision of $P_G(y_1,\cdots,y_m)$ has the form $\prod_{T}$ such that $T$ is a spanning tree of $G$.
%Each fine mixed cell in a mixed subdivision of $P_G(y_1,\cdots,y_m)$ has the form $y_1 \Delta_{J_1} + \cdots \times y_m \Delta_{J_m}$, for some sequence of nonempty subsets $\J = (J_1,\cdots,J_m)$ in $[n]$ such that $G_{\J}$ is a spanning tree of $G$.
\end{lemma}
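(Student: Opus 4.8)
The plan is to take an arbitrary fine mixed cell, write it in the form $\prod_J$, and then force the associated bipartite graph to be a spanning tree of $G$ by combining two dimension counts: one coming from the cell being full-dimensional (which yields connectedness and the fact that it spans), and one coming from the fineness condition $\sum_i \dim B_i = d$ (which pins down the edge count, hence acyclicity). First I would record the shape of a cell. By the definition of a Minkowski cell, each summand $B_i$ is the convex hull of a subset of the vertices of $y_i \Delta_{I_i}$; since the vertices of $\Delta_{I_i}$ are exactly the $e_j$ with $j \in I_i$, we get $B_i = y_i \Delta_{J_i}$ for some $J_i \subseteq I_i$, and such a $B_i$ is automatically a simplex. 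As $B_i$ is a nonempty Minkowski summand, $J_i \neq \emptyset$ (I assume all $y_i > 0$, otherwise one simply drops those indices and replaces $m$ by the number of active left vertices). Thus the cell equals $\prod_J$ with $J = (J_1,\ldots,J_m)$, and its associated bipartite graph $H$ is a subgraph of $G$ containing every left vertex.

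The key computation I would carry out is the dimension of a Minkowski sum of coordinate simplices, namely $\dim \prod_J = |\bigcup_i J_i| - c(H)$, where $c(\cdot)$ denotes the number of connected components. This holds because the affine direction space of $\prod_J$ is spanned by the differences $e_j - e_{j'}$ with $j,j'$ lying in a common $J_i$; the span of such differences over the edges of any graph $\Gamma$ has dimension $|V(\Gamma)| - c(\Gamma)$, and the components of this ``clique graph'' on the right-vertex set $\bigcup_i J_i$ coincide with those of $H$, since each nonempty $J_i$ ties the left vertex $i$ to its neighboring right vertices. Applying the same formula to the entire polytope gives $d = n - c(G)$.

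Now the two counts finish the argument. Because a Minkowski cell is required to be full-dimensional, $|\bigcup_i J_i| - c(H) = d = n - c(G)$; assuming $G$ connected we have $d = n-1$, and since $|\bigcup_i J_i| \le n$ and $c(H) \ge 1$ this forces $|\bigcup_i J_i| = n$ and $c(H) = 1$. Hence $H$ is connected and contains all right vertices, so $H$ spans $G$. Next, fineness gives $d = \sum_i \dim B_i = \sum_i (|J_i| - 1) = |E(H)| - m$, so $|E(H)| = m + n - 1$. A connected graph on $m + n$ vertices with exactly $m + n - 1$ edges is a tree, so $H$ is a spanning tree $T$ of $G$ and the cell is $\prod_T$.

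The main obstacle is establishing the dimension formula cleanly, and in particular matching the component count of the clique graph on $\bigcup_i J_i$ with that of the bipartite graph $H$; once this lemma is secured, the two linear relations do all of the work and no further case analysis is needed. A secondary point to be careful about is the bookkeeping of left versus right vertices in the edge count $|E(H)| = \sum_i |J_i|$, which relies on every left vertex being active.
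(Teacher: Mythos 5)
Your argument is correct: the reduction of each summand to $y_i\Delta_{J_i}$, the dimension formula $\dim\prod_J=\lvert\bigcup_i J_i\rvert-c(H)$ via the graphic-matroid rank, and the two counts (full dimension forces connected and spanning; fineness forces $\lvert E(H)\rvert=m+n-1$, hence a tree) constitute the standard proof of this fact, and it is essentially the argument in Postnikov's original Lemma 14.7 --- the note itself only cites that result and gives no proof to compare against. The only point worth tightening is that ``spanning tree of $G$'' implicitly presumes $G$ connected; your inequality $c(H)\geq c(G)$ (using that every component of $G$ contains a left vertex, as $G$ has no isolated vertices) in fact handles the general case and yields a spanning forest with one tree per component, so the restriction to connected $G$ is not needed.
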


An example of a fine mixed subdivision of the polytope considered in Figure~\ref{fig:ex1} is given in Figure~\ref{fig:ex2}.

%Each fine Minkowski cell gives a spanning tree in $G$ this way. 

%So given a spanning tree $T \subseteq G$, we denote $\prod_T$ to be the corresponding fine Minkowski cell. 

\begin{figure}[htbp]
	\begin{center}
		\includegraphics[width=0.9\textwidth]{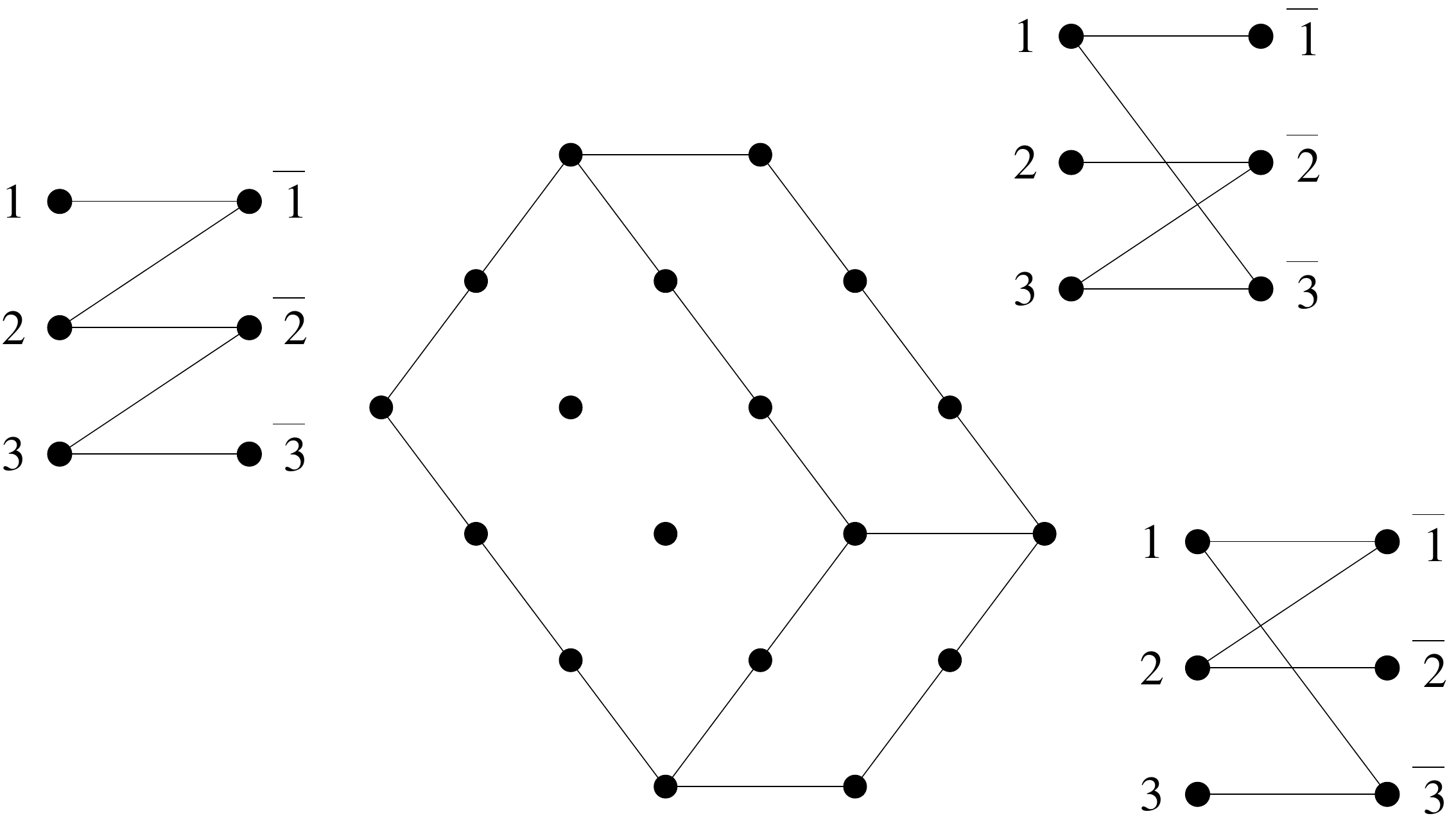}
		\caption{A fine mixed subdivision of $P_G(1,2,3)$.}
		\label{fig:ex2}
	\end{center}
\end{figure}

We can say a bit more about the lattice points in each $\prod_T$:

\begin{proposition}[\cite{Postnikov01012009}, Proposition 14.12]
\label{prop:ptsum}
Any lattice point of a fine Minkowski cell $\prod_T$ in $P_G(y_1,\cdots,y_m)$ is uniquely expressed (within $\prod_T$) as $p_1 + \cdots + p_m$ where $p_i$ is a lattice point in $y_i \Delta_{T_i}$.
\end{proposition}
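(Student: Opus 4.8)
The plan is to translate the statement into one about nonnegative integer solutions of a transportation problem on the tree $T$, and then to exploit the fact that a tree forces a triangular system. First I would recall that the lattice points of the dilated simplex $y_i \Delta_{T_i}$ are exactly the points $p_i = \sum_{j \in T_i} c_{ij} e_j$ with $c_{ij} \in \ZZ_{\geq 0}$ and $\sum_{j \in T_i} c_{ij} = y_i$. Under this description, a decomposition $x = p_1 + \cdots + p_m$ of a lattice point $x = (x_1,\dots,x_n)$ corresponds exactly to a nonnegative integer array $(c_{ij})$, indexed by the edges $(i,\bar j)$ of $T$, whose row sums are the prescribed $y_i$ and whose column sums are the coordinates $x_j$. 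Thus the proposition becomes: for a lattice point $x \in \prod_T$, there is a unique such array. Note that the identity $\sum_j x_j = \sum_i y_i$, needed for these margins to be consistent, holds because $\prod_T$ lies on the hyperplane $\sum_i x_i = \sum_j y_j$.

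Next I would establish uniqueness by leaf-peeling on the tree $T$. Since $T$ is a tree on the vertex set $\{1,\dots,m,\bar 1,\dots,\bar n\}$, it has a leaf. If the leaf is a left vertex $i$, then $|T_i| = 1$, say $T_i = \{j\}$, and the row constraint forces $c_{ij} = y_i$; if the leaf is a right vertex $\bar j$, it lies in a unique $T_i$, and the column constraint forces $c_{ij} = x_j$. In either case one variable is determined, and deleting the leaf together with its incident edge (and subtracting the determined value from the opposite margin) yields the same problem on a tree with one fewer edge. Iterating, every $c_{ij}$ is forced, so the array, if it exists at all, is unique, even as a real solution. Moreover, because the margins $y_i$ and $x_j$ are integers and each peeling step only subtracts previously determined integers, all the forced values are integers.

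Finally I would argue existence and nonnegativity together. Since $x \in \prod_T = y_1\Delta_{T_1} + \cdots + y_m\Delta_{T_m}$, by the definition of the Minkowski sum there exist real points $q_i \in y_i \Delta_{T_i}$ with $x = q_1 + \cdots + q_m$, and these give a nonnegative real array with the correct margins. By the uniqueness established above, this real array must coincide with the integral one produced by leaf-peeling. Hence the leaf-peeled array is simultaneously integral and nonnegative, so the corresponding $p_i = \sum_{j\in T_i} c_{ij} e_j$ are genuine lattice points of $y_i\Delta_{T_i}$ summing to $x$, and they are unique. I expect the main obstacle to be the bookkeeping that lets uniqueness and existence do double duty: uniqueness is what transfers nonnegativity from the a priori real Minkowski decomposition to the integral leaf-peeled solution, while it is the integrality of the leaf-peeling that upgrades that real decomposition to a lattice one. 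Combining these two inputs in the right order, rather than trying to prove nonnegativity of the integer solution directly, which would require tracking feasibility of every residual margin, is the delicate point.
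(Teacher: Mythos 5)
Your proof is correct. Note, however, that the paper does not actually prove this statement: it is imported verbatim as Proposition 14.12 of Postnikov's \emph{Permutohedra, associahedra, and beyond} and used as a black box, so there is no in-paper argument to compare against. Your argument is essentially the standard one for that result: encode a decomposition $x = p_1 + \cdots + p_m$ as a nonnegative array $(c_{ij})$ supported on the edges of $T$ with row margins $y_i$ and column margins $x_j$, observe that a tree makes this a triangular system so that leaf-peeling determines every entry uniquely (and integrally, since the margins are integers), and then transfer nonnegativity from the real Minkowski decomposition guaranteed by $x \in \prod_T$ via that uniqueness. The one point worth being explicit about is the base case of the peeling: when only a single edge remains, its two residual margins must agree, which follows from the conservation identity $\sum_j x_j = \sum_i y_i$ being preserved at every peeling step. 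With that noted, your ordering of the steps --- uniqueness first, then using it to inherit nonnegativity rather than tracking feasibility of the residual margins --- is exactly the clean way to close the argument, and it is a self-contained replacement for the citation.
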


\section{Semi-polytope decomposition}

A mixed subdivision of $P_G$ divides the polytope into cells. In this section, we show that from a mixed subdivision of $P_G$, one can obtain a way to decompose the set of lattice points of $P_G^{-}$.

\begin{definition}[\cite{Postnikov01012009}, Definition 11.2]
The \newword{trimmed generalized permutohedron} $P_G^{-}$ is defined as:
$$ P_G^{-}(y_1,\ldots,y_m) := \{x \in \RR^n|x+ \Delta_{[n]} \subseteq P_G\}. $$
\end{definition}

This is a more general class of polytopes than generalized permutohedra $P_G(y_1,\ldots,y_m)$. With a slight abuse of notation, we will let $I \setminus j$ stand for $I \setminus \{j\}$.
% (any generalized permutohedron $P_G(y_1,\ldots,y_m)$ can be thought as $P_G'(y_1,\ldots,y_m,1)$, where $G'$ is obtained from $G$ by adding a vertex on the left side that is connected to all vertices on the right side). 

%\begin{remark}
%The semi-polytope decomposition can also be thought in the following manner. $P_G$ can be thought of as choosing a certain convex hull inside a simplicial grid. The center of mass of each unit simplex contained in $P_G$ gives the set of lattice points of $P_G^{-}$. Then we just translate every such points by $e_1$ to decide which lattice points of $P_G^{-}$ each cell of $P_G$ takes ownership of. 
%\end{remark}

\begin{definition}[\cite{Postnikov01012009}, Theorem 11.3]
The coordinate \newword{semi-simplices} are defined as
$$\Delta_{I,j}^{*} = \Delta_{I} \setminus \Delta_{I \setminus j}$$
for $j \in I \subseteq [n]$.
\end{definition}

For each cell $\prod_T$, we are going to turn it into a \newword{semi-polytope} of the form $y_1 \Delta_{J_1,j_1}^* + \cdots + y_m \Delta_{J_m,j_m}^*$. This will involve deciding which cell takes ownership of the lattice points on several cells at the same time.

We denote the point $((m-1)c+\sum_i y_i,-c,\ldots,-c)$ for $c$ sufficiently large as $\infty_1$. For a facet of a polytope, we say that it is \newword{negative} if the defining hyperplane of the facet (inside the space $\sum_{i \in [n]} x_i = \sum_{j \in [m]} y_j$ which the polytope lies in) separates the point $\infty_1$ and the interior of the polytope. Otherwise, we say that it is \newword{positive}. We will say that a point of a polytope is \newword{good} if it is not on any of the positive facets of the polytope.

\begin{lemma}
\label{lem:iuni} 
Fix $T$, a spanning tree of $G \subseteq K_{m,n}$. Let $T_i$ be the set of neighbors of $i$. For each $i$ such that $\bar{1} \not \in T_i$, there exists a unique element $t_i$ in $T_i$ such that there exists a path to $\bar{1}$ not passing through $i$. 
\end{lemma}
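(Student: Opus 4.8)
The plan is to exploit only the defining properties of a tree---connectivity and acyclicity---rather than anything specific to the bipartite or Minkowski-cell setup. Since $T$ is a spanning tree of $G$, it is connected, so in particular the right vertex $\bar{1}$ and the left vertex $i$ are joined by a path in $T$; and since $T$ is acyclic, that path is unique. I would organize the whole argument around the connected components of $T$ after deleting the vertex $i$.

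First, for existence, I would take the unique path $i = v_0, v_1, \ldots, v_k = \bar{1}$ from $i$ to $\bar{1}$ in $T$. Because $\bar{1} \notin T_i$, the vertex $i$ is not adjacent to $\bar{1}$, so $k \geq 2$ and the second vertex $v_1$ is a genuine neighbor of $i$, i.e.\ $v_1 \in T_i$. The tail $v_1, v_2, \ldots, v_k = \bar{1}$ is a subpath of a simple path, hence it does not revisit $i$; this is exactly a path from $v_1$ to $\bar{1}$ avoiding $i$. Thus $t_i := v_1$ witnesses existence.

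For uniqueness, I would delete the vertex $i$ from $T$. Since $T$ is a tree, $T \setminus \{i\}$ breaks into exactly $\deg_T(i) = |T_i|$ connected components, and acyclicity forces each component to contain exactly one neighbor of $i$ (two neighbors in a single component would, together with their edges to $i$, close up a cycle through $i$). A neighbor $w \in T_i$ admits a path to $\bar{1}$ not passing through $i$ precisely when $w$ and $\bar{1}$ lie in the same component of $T \setminus \{i\}$; since $\bar{1} \neq i$, it sits in exactly one such component, and that component contains exactly one neighbor of $i$. That neighbor must then coincide with $v_1$, giving both existence and uniqueness in one stroke.

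I do not expect a genuine obstacle here, since the statement is a standard consequence of unique path-connectivity in trees. The only points requiring care are the equivalence ``path to $\bar{1}$ avoiding $i$'' $\Leftrightarrow$ ``same component of $T \setminus \{i\}$ as $\bar{1}$,'' together with the count that each component of $T \setminus \{i\}$ contains exactly one element of $T_i$; both follow immediately from acyclicity, so I would state them briefly and conclude.
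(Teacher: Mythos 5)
Your proof is correct and follows the same route as the paper's: existence comes from the connectivity of the spanning tree (via the unique $i$--$\bar{1}$ path), and uniqueness from acyclicity (two valid neighbors would close a cycle through $i$). The paper's own proof is a two-sentence version of exactly this argument; your component-of-$T\setminus\{i\}$ bookkeeping just makes the cycle argument explicit.
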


\begin{proof}
There exists such an element since $T$ is a spanning tree of $T$. There cannot be more than one such element since otherwise, we get a cycle in $T$.
\end{proof}

In cases where $T_i$ does contain $\bar{1}$, we set $t_i$ to be $\bar{1}$.

%The only good point of $\Delta_{[n]}$ is the point $e_1$, which is the closest point to $\infty_1$. This can be thought of as not being on a positive facet, that contains the interior of $\Delta_{[n]}$ and $\infty_1$ on the same side. This idea generalizes to:

%In other words, a point of $\Delta_{[n]}$ is good if and only if it is not on the positive facet of $\Delta_{[n]}$. 

\begin{lemma}
\label{lem:sign}
Let $\prod_T$ be a fine mixed cell. Removing the positive facets gives us $\sum_i \Delta^*_{T_i,t_i}$. 

%Erasing all lattice points on the positive facets gives us the same set of lattice points as $\sum_i \Delta^*_{T_i,t_i}$. 
%A point in $\prod_{T}$ is good if and only if it is not on any positive facets of $\prod_{T}$.
\end{lemma}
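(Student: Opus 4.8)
The plan is to use the fact that a \emph{fine} cell is combinatorially a product of simplices, reduce the statement to a facet-by-facet sign computation, and then read the answer off the tree.

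First I would record the structural input. Since $\prod_T$ is fine, $\dim \prod_T = n-1 = \sum_i(|T_i|-1) = \sum_i \dim \Delta_{T_i}$, so the Minkowski-sum map $\mu\colon \prod_i \Delta_{T_i}\to \prod_T$, $(p_1,\dots,p_m)\mapsto \sum_i y_i p_i$, is a linear surjection between polytopes of equal dimension whose summand directions are independent; hence $\mu$ is a bijection. (This is the ``all points'' strengthening of Proposition~\ref{prop:ptsum}: if $\sum_i y_i p_i = \sum_i y_i p_i'$ then $\sum_i y_i(p_i-p_i')=0$ is a relation among independent affine hulls, forcing $p_i=p_i'$.) Consequently $\prod_T$ is affinely isomorphic to $\prod_i \Delta_{T_i}$, its faces are exactly the images $\mu(\prod_i F_i)$ with $F_i$ a face of $\Delta_{T_i}$, and its facets are $F_{(i,j)} := y_i\Delta_{T_i\setminus j}+\sum_{i'\neq i} y_{i'}\Delta_{T_{i'}}$, one for each edge $(i,\bar j)$ of $T$ with $|T_i|\ge 2$. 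Under $\mu$ the semi-polytope $y_1\Delta^{*}_{T_1,t_1}+\cdots+y_m\Delta^{*}_{T_m,t_m}$ equals $\mu(\prod_i \Delta^{*}_{T_i,t_i})$, which, $\mu$ being a bijection, is precisely $\prod_T$ with the facets $\{F_{(i,t_i)} : |T_i|\ge 2\}$ deleted. So it remains to show that these are exactly the positive facets.

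Next I would locate the supporting hyperplane of $F_{(i,j)}$ and compute its sign. Deleting the edge $(i,\bar j)$ disconnects $T$ into the component $C_i\ni i$ and $C_{\bar j}\ni \bar j$; let $A,B$ be their right-vertex (coordinate) sets, so $A\sqcup B=[n]$, $j\in B$ and $T_i\setminus j\subseteq A$, while each $T_{i'}$ with $i'\neq i$ lies entirely in $A$ or in $B$. A functional $w$ that is maximized on the full simplex of every factor $i'\neq i$ and on $\Delta_{T_i\setminus j}$ must be constant on each of $A,B$ with a strictly larger value on $A$; hence the outer normal of $F_{(i,j)}$ is $\One_A$, i.e.\ $F_{(i,j)}$ lies on the tight hyperplane $\sum_{k\in A}x_k = z_T(A)$, where $z_T(S)=\sum_{i':T_{i'}\cap S\neq\emptyset}y_{i'}$. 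To read the sign I substitute $\infty_1$, projected onto the ambient hyperplane $\sum_k x_k=\sum_i y_i$ along $\One$, into $\sum_{k\in A}x_k - z_T(A)$; the coefficient of the large parameter $c$ works out to $m\big([\bar 1\in A]-|A|/n\big)$, whose sign is governed solely by whether $\bar 1\in A$ since $0<|A|<n$. Thus $F_{(i,j)}$ separates $\infty_1$ from the interior exactly when $\bar1\in A$, so the facet is negative iff $\bar1\in A$ and positive iff $\bar1\in B$.

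Finally I would match this to $t_i$. By construction $\bar1\in B$ means there is a path from $\bar j$ to $\bar1$ in $T$ avoiding $i$, which by Lemma~\ref{lem:iuni} holds for exactly one neighbor $j$ of $i$, namely $j=t_i$ (and $t_i=\bar1$ when $\bar1\in T_i$). Hence the positive facets are precisely $\{F_{(i,t_i)} : |T_i|\ge 2\}$, and deleting them yields $\sum_i y_i\Delta^{*}_{T_i,t_i}$; when $|T_i|=1$ we have $\Delta^{*}_{T_i,t_i}=\Delta_{T_i}$ and $i$ contributes no facet, consistently. The main obstacle is the sign computation of the third paragraph: one must correctly identify the supporting hyperplane of $F_{(i,j)}$ as a \emph{tight}-set inequality and treat $\infty_1$ carefully (it does not lie on the ambient hyperplane, so it must be projected before testing separation), after which isolating the decisive quantity $[\bar1\in A]-|A|/n$ is exactly what ties the geometry to the combinatorial condition $j=t_i$.
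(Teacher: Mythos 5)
Your proof is correct and follows essentially the same route as the paper's: facets of $\prod_T$ correspond to edges of $T$, the supporting hyperplane is the tight-set equality determined by the two components of $T$ minus that edge, and the sign relative to $\infty_1$ is governed by which component contains $\bar{1}$, which Lemma~\ref{lem:iuni} ties to $t_i$. You supply two details the paper leaves implicit --- the complete enumeration of the facets via the product structure of a fine cell, and the normalization of $\infty_1$ onto the ambient hyperplane before testing separation --- but the substance coincides with Lemma~\ref{lem:temp} and its application in the paper's proof.
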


To prove this, we first introduce a tool that will be useful for identifying which hyperplanes the facets lie on. Let $\prod_T$ be a fine mixed cell so $T$ a spanning tree. For any edge $e$ of $T$ that is not connected to a leaf on the left side, $T \setminus e$ has two components. Let $I_e$ denote the set of right vertices of a component that contains $\bar{1}$. Let $c_e$ be the sum of $y_i$'s for left vertices contained in that component. Notice that $I_e$ cannot be $[n]$ since otherwise $e$ would have a leaf as its left endpoint.

\begin{lemma}
\label{lem:temp}
Let $\prod_T$ be a fine mixed cell. For any edge $e$ of $T$ that is not connected to a leaf on the left side, $\prod_{T \setminus e}$ is a facet of $\prod_T$ that lies on $\sum_{j\in I_e} x_j = c_e$. If the right endpoint of $e$ is in $I_e$, then $\prod_T$ lies in half-space $\sum_{j\in I_e} x_j \geq c_e$. Otherwise it lies in $\sum_{j\in I_e} x_j \leq c_e$
\end{lemma}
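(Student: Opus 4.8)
The plan is to test the cell against the single linear functional $\ell(x) = \sum_{j \in I_e} x_j$ and to exploit the standard fact that the face of a Minkowski sum in a prescribed direction is the Minkowski sum of the faces of the summands in that direction. This one principle will simultaneously identify $\prod_{T \setminus e}$ as the extremal face, show it lies on the hyperplane $\ell = c_e$, and read off the sign of the supporting half-space from whether $c_e$ is the minimum or the maximum of $\ell$.

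First I would fix coordinates: write a point of $y_p \Delta_{T_p}$ as $\sum_{j \in T_p} \mu^{(p)}_j e_j$ with $\mu^{(p)}_j \geq 0$ and $\sum_j \mu^{(p)}_j = y_p$, so that a point of $\prod_T$ has $j$-th coordinate $\sum_{p : j \in T_p} \mu^{(p)}_j$. Writing $e = (i,\bar{k})$, let $A$ be the component of $T \setminus e$ containing $\bar 1$ and $B$ the other one, so that $I_e$ is the set of right vertices of $A$ and $c_e = \sum_{p \in A} y_p$, summed over left vertices $p \in A$. The key combinatorial observation is that $\ell$ is constant on $y_p \Delta_{T_p}$ for every left vertex $p \neq i$: since such a $p$ is not an endpoint of $e$, all of its tree-edges survive in $T \setminus e$, so $T_p$ lies entirely in $p$'s own component, giving $\ell \equiv y_p$ when $p \in A$ and $\ell \equiv 0$ when $p \in B$. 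The only summand on which $\ell$ varies is $y_i \Delta_{T_i}$, and there it only distinguishes the coordinate $\bar{k}$ from the rest, because the remaining neighbors of $i$ stay in $i$'s component while $\bar{k}$ sits across $e$.

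I would then separate the two cases of the statement. If $\bar{k} \in I_e$ then $i \in B$, so $\ell|_{y_i \Delta_{T_i}} = \mu^{(i)}_{\bar{k}}$ sweeps $[0, y_i]$ and $\ell$ on $\prod_T$ sweeps $[c_e,\, c_e + y_i]$; hence $\prod_T \subseteq \{\ell \geq c_e\}$ and the minimizing face is cut out by $\mu^{(i)}_{\bar{k}} = 0$. If $\bar{k} \notin I_e$ then $i \in A$, so $\ell|_{y_i \Delta_{T_i}} = y_i - \mu^{(i)}_{\bar{k}}$ sweeps $[0, y_i]$ and $\ell$ on $\prod_T$ sweeps $[c_e - y_i,\, c_e]$; hence $\prod_T \subseteq \{\ell \leq c_e\}$ and the maximizing face is again cut out by $\mu^{(i)}_{\bar{k}} = 0$. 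In either case the extremal face at $\ell = c_e$ is, by the Minkowski-sum-of-faces principle, exactly $\sum_{p \neq i} y_p \Delta_{T_p} + y_i \Delta_{T_i \setminus \bar{k}} = \prod_{T \setminus e}$.

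Finally I would confirm this face is genuinely a facet. Since $T$ is a spanning tree, $\dim \prod_T = |E(T)| - m = n-1 = d$, and $T \setminus e$ is a spanning forest with one fewer edge, so $\dim \prod_{T \setminus e} = (m+n-2) - m = n-2 = d-1$; here the hypothesis that $e$ is not attached to a left leaf ensures both that $T_i \setminus \bar{k} \neq \emptyset$ (the summand does not collapse) and, as already noted before the statement, that $I_e \neq [n]$ (so $\ell$ is not constant on the ambient hyperplane). I expect the main obstacle to be bookkeeping the inequality direction cleanly, namely tracking the constant contributions of the $A$-side and $B$-side simplices so that the extremal value is exactly $c_e$ and not $c_e$ shifted by $y_i$; once the two cases are disentangled this collapses to the elementary one-parameter range computation above.
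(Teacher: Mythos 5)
Your proof is correct and takes essentially the same approach as the paper's: both arguments identify $\prod_{T \setminus e}$ as the face of the Minkowski sum extremizing $\ell(x)=\sum_{j\in I_e} x_j$ and read the direction of the inequality off which side of $e$ the right endpoint lies on. Yours is simply a fully worked-out version of the paper's brief sketch, making explicit the constant contributions ($y_p$ or $0$) of the summands with $p\neq i$ and the one-parameter range of $\ell$ on $y_i\Delta_{T_i}$.
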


\begin{proof}
The dimension difference between $\prod_T$ and $\prod_{T \setminus e}$ is at most one, and all endpoints of $\prod_{T \setminus e}$ lie on $\sum_{j\in I_e} x_j = c_e$. If the right endpoint of $e$ is in $I_e$, that means we can find a point $x$ using $e$ so that $\sum_{j\in I_e} x_j > c_e$. If not, that means we can find a point $x$ using $e$ so that $\sum_{j\in I_e} x_j < c_e$.
\end{proof}

\begin{proof}[Proof of Lemma~\ref{lem:sign}]

 If $\prod_{T \setminus e}$ is a positive facet of $\prod_T$, then Lemma~\ref{lem:temp} tells us that the right endpoint of $e = (i,\bar{j})$ is in $I_e$. From definition of $t_i$, we have $\bar{j} = t_i$. In other words we are removing sets of form $\Delta_{T_1} + \cdots + \Delta_{T_i \setminus t_i} + \cdots + \Delta_{T_m}$. At the end we end up with $\sum_i (\Delta_{T_i} \setminus \Delta_{T_i \setminus t_i})$.

\end{proof}

\begin{figure}[htbp]
	\begin{center}
		\includegraphics[width=0.9\textwidth]{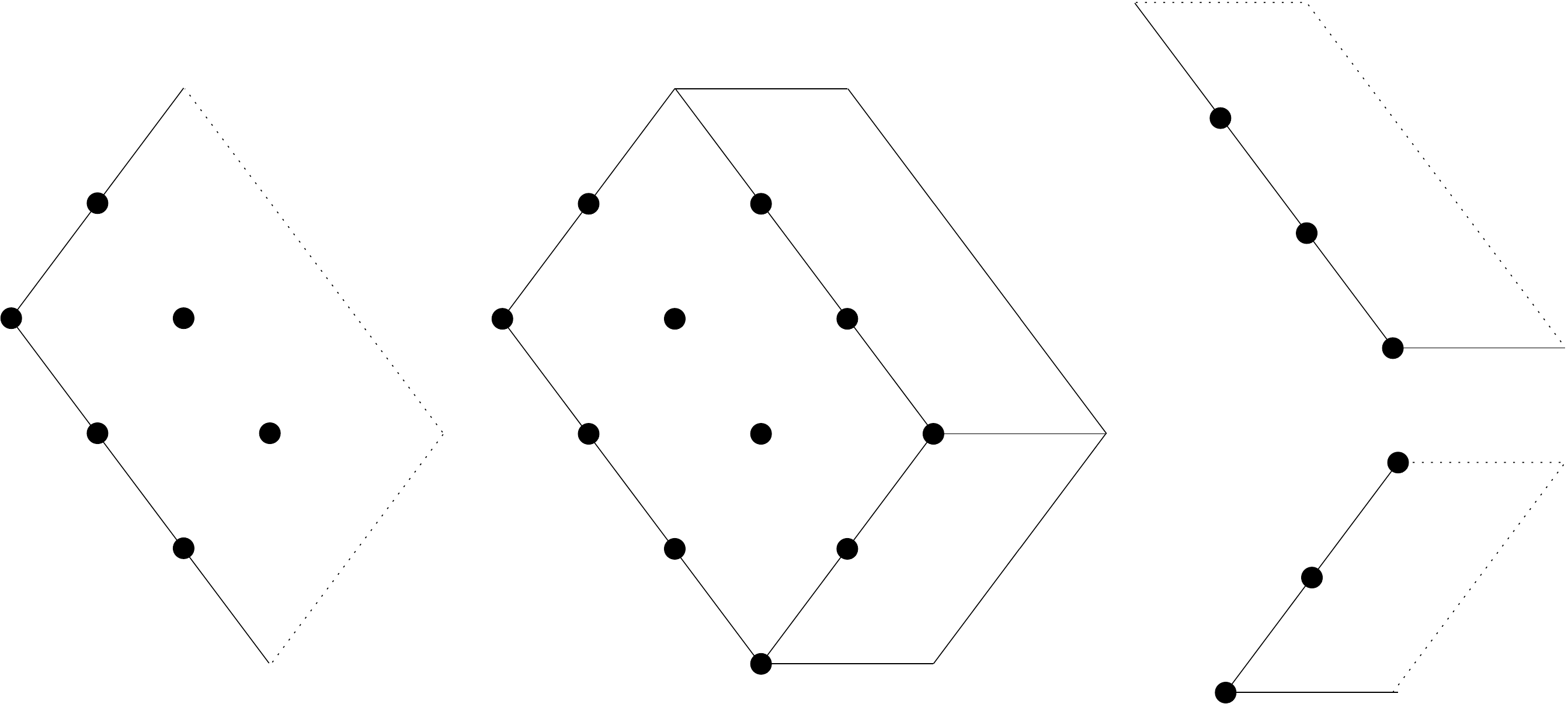}
		\caption{A semi-polytope decomposition of $P_G^{-}(1,2,3)$.}
		\label{fig:ex3}
	\end{center}
\end{figure}

Let $\prod_T$ and $\prod_T'$ be two cells inside a mixed subdivision of $P_G$ that share a facet $F$. The sign of $F$ in $\prod_T$ and the sign of $F$ in $\prod_T'$ has to be different, since $\infty_1$ can be on exactly one side of the defining hyperplane of $F$. This implies that all lattice points of $P_G$ are good in at most one cell of $P_G$. 

\begin{lemma}
\label{lem:shift}
$p \in P_G^{-} \cap \ZZ^n$ if and only if $p + e_1$ is a good point of $P_G$.
\end{lemma}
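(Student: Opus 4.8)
The plan is to translate both sides of the claimed equivalence into the facet inequalities of $P_G$ and compare them directly. Recall that $P_G$ is a generalized permutohedron, so it is cut out inside the hyperplane $\sum_{j\in[n]}x_j=\sum_i y_i$ by the inequalities $\sum_{j\in S}x_j\le z(S)$, where $z(S)=\max_{x\in P_G}\sum_{j\in S}x_j=\sum_{i:\,I_i\cap S\neq\emptyset}y_i$ is integer-valued and $F_S=\{x\in P_G:\sum_{j\in S}x_j=z(S)\}$ is the corresponding facet. The first thing I would record is that $p\in P_G^{-}\cap\ZZ^n$ is equivalent to $p+e_i\in P_G$ for every $i\in[n]$ (by convexity of $\Delta_{[n]}$, it suffices to contain the vertices), and, expanding this over all $S$, to the system $\sum_{j\in S}p_j\le z(S)-1$ for all $\emptyset\neq S\subsetneq[n]$ together with $\sum_j p_j=\sum_i y_i-1$. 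The second preliminary is a sign dictionary: using the definition of $\infty_1$ together with Lemma~\ref{lem:temp} (which places each facet hyperplane at $\sum_{j\in I_e}x_j=c_e$ with $\bar 1$ in the distinguished component), I would show that the positive facets of $P_G$ are exactly the $F_S$ whose defining subset omits the index $1$, i.e.\ those with $1\notin S$.

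Granting these two facts, the forward direction is short. If $p\in P_G^{-}$ then in particular $p+e_1\in P_G$, and for every facet $F_S$, choosing any $j\in S$ and using $p+e_j\in P_G$ gives $\sum_{j\in S}p_j\le z(S)-1$. For a \emph{positive} facet we have $1\notin S$, so $\sum_{j\in S}(p+e_1)_j=\sum_{j\in S}p_j\le z(S)-1<z(S)$; hence $p+e_1$ lies on no positive facet and is good.

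For the reverse direction I would argue by contradiction. Suppose $p+e_1$ is good but $p\notin P_G^{-}$, so some $p+e_i\notin P_G$; then $p+e_i$ violates an honest facet inequality, $\sum_{j\in S}(p+e_i)_j>z(S)$. Since $p+e_1\in P_G$ we have $\sum_{j\in S}p_j\le z(S)$ (subtracting the contribution of $e_1$ in case $1\in S$), and integrality then forces $i\in S$ and $\sum_{j\in S}p_j=z(S)$. If $1\in S$ we would instead get $\sum_{j\in S}p_j\le z(S)-1$, a contradiction, so $1\notin S$; but then $\sum_{j\in S}(p+e_1)_j=z(S)$, so $p+e_1\in F_S$, a positive facet, contradicting goodness. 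This proves $p\in P_G^{-}$, and the matching of coordinate sums is automatic.

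I expect the real work to be the sign dictionary, namely that the positive facets are precisely those with $1\notin S$; everything after that is bookkeeping in which integrality is used only to upgrade a strict inequality $\sum_{j\in S}p_j<z(S)$ to $\sum_{j\in S}p_j\le z(S)-1$. Pinning the sign down means checking on which side of each facet hyperplane the point $\infty_1$ falls relative to the interior of $P_G$, which is where the precise coordinates of $\infty_1$ and the orientation data of Lemma~\ref{lem:temp} (whether the right endpoint of $e$ lies in $I_e$) must be reconciled; this is the step I would carry out most carefully.
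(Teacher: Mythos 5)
Your proof is correct and takes essentially the same route as the paper's: both directions reduce to the criterion that $p+e_i\in P_G$ for all $i\in[n]$, identify the positive facets as exactly those whose defining hyperplane $\sum_{j\in S}x_j=z(S)$ (with $P_G$ on the $\le$ side) has $1\notin S$, and use integrality to convert strict inequalities into a drop by $1$. The only difference is presentational: you make explicit the sign dictionary and the facet inequalities that the paper's proof uses implicitly.
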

\begin{proof}
Having $p \in P_G^{-} \cap \ZZ^n$ implies from definition that $p + e_i \in P_G$ for each $i \in [n]$. Assume for sake of contradiction $p+e_1$ is on some positive facet $x_I = c_I$ with $1 \in I$. Any $p+e_i$ is either on that facet or is on the same side as $\infty_1$. Hence $I = [n]$ and we get a contradiction.

%So $p+e_1$ cannot be on any of the positive facets of $P_G$.
%The condition $p \in P_G^{-} \cap \ZZ^n$ implies that $p + e_i \in P_G$ for each $i \in [n]$. So for whichever hyperplane $x_I = c$ (we can assume that $1 \in I$) that passes through $p + e_1$,  for any $j \in I^c$, the point $p + e_j$ would lie on opposite side of the point $\infty_1$ with respect to that hyperplane. So $p+e_1$ cannot be on any positive facets of $P_G$ and hence is a good point of $P_G$.

Now look at the case when $p+e_1$ is a good point of $P_G$. Assume for the sake of contradiction that $p+e_j$ is not in $P_G$ for some $j \in [n]$. Then $p+e_1$ is on a facet of $P_G$, whose corresponding hyperplane is given by $x_I = c_I$ where $1 \in I$ and $j \not \in I$. This hyperplane separates the interior of $P_G$ with $p+e_j$. Since $j \not \in I$, the point $\infty_1$ has to be on opposite side of $p+e_j$. This is a positive facet and we get a contradiction.

%And this hyperplane separates the interior of $P_G$ with $p+e_j$. But this facet must be a negative facet, which implies that the interior of $P_G$ and $p+e_j$ must be on the same side of the hyperplane, thus leading to a contradiction.
\end{proof}

Combining what we have so far, we state the main result of our note on how to do the semi-polytope decomposition with an explicit way to obtain each of the semi-polytopes:
\begin{theorem}
\label{thm:spdecomp}
Identify the lattice points of $P_G^{-}$ with points not lying on any of the positive facets of $P_G$ via the map $p \rightarrow p+e_1$ (as in Lemma~\ref{lem:shift}). Pick any full mixed subdivision of $P_G$. For each cell $\prod_T$, construct a semi-polytope by $\sum_i \Delta^{*}_{T_i,t_i}$ (where $t_i$ is chosen as in Lemma~\ref{lem:iuni}). Then the (disjoint) union of the semi-polytopes is exactly $P_G$ minus the positive facets. Each lattice point of $P_G^{-}$ with the above identification is contained in exactly one semi-polytope.
\end{theorem}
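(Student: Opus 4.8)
The plan is to assemble the three preparatory lemmas into the two assertions of the theorem: that the semi-polytopes tile $P_G$ minus its positive facets, and that each lattice point of $P_G^-$ (transported by $p \mapsto p + e_1$) lands in exactly one of them. The key starting observation is that Lemma~\ref{lem:sign} already identifies the semi-polytope $\sum_i \Delta^*_{T_i,t_i}$ attached to a cell $\prod_T$ with the set of \emph{good points} of $\prod_T$, i.e.\ the points of $\prod_T$ lying on none of its positive facets. Hence the whole statement reduces to a single local claim about membership: for every $x \in P_G$, the point $x$ is good in exactly one cell of the chosen subdivision when $x$ lies on no positive facet of $P_G$, and good in no cell when $x$ does lie on a positive facet of $P_G$.

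Granting this claim, the first assertion is immediate: the union of the semi-polytopes is the set of points that are good in at least one cell, which by the claim is exactly $P_G$ minus its positive facets, and the ``exactly one'' clause says this union is disjoint. The second assertion then follows by composing with Lemma~\ref{lem:shift}, which identifies $P_G^- \cap \ZZ^n$ with the good points of $P_G$ via $p \mapsto p + e_1$: each such good point, being off every positive facet of $P_G$, lies in precisely one semi-polytope by the claim.

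It remains to prove the local claim, and the engine is a one-sided perturbation relative to $\infty_1$. First I would record the elementary fact that if $x$ is good in a cell $\prod_T$ then every facet of $\prod_T$ through $x$ is negative, so a short displacement of $x$ in the direction pointing away from $\infty_1$ crosses each of those facets inward (and avoids the non-incident facets for a small enough step), landing in the relative interior of $\prod_T$. For uniqueness, if $x$ were good in two distinct cells $\prod_T$ and $\prod_{T'}$, the same displacement would place the perturbed point simultaneously in the relative interiors of both, contradicting that distinct cells of a subdivision have disjoint interiors; this is the rigorous form of the sign remark preceding Lemma~\ref{lem:shift}. For existence, when $x$ lies on no positive facet of $P_G$ every facet of $P_G$ through $x$ is negative, so the same away-from-$\infty_1$ displacement pushes $x$ into $\mathrm{int}(P_G)$, and choosing the step to land in the relative interior of a cell $\prod_T$ exhibits $x$ as a good point of $\prod_T$. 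Finally, if $x$ does lie on a positive facet $H$ of $P_G$, then $H$ is a supporting hyperplane of every incident cell $\prod_T$ with $\prod_T$ on the $\infty_1$-side, so $x$ sits on a positive facet of each such cell and is good in none.

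The step I expect to be the main obstacle is pinning down the perturbation cleanly, specifically ensuring that the away-from-$\infty_1$ displacement can be chosen to land in the relative interior of a single cell rather than on an interior wall of the subdivision. This is exactly where the hypothesis that $\infty_1 = ((m-1)c + \sum_i y_i, -c, \ldots, -c)$ is taken with $c$ sufficiently large enters: it guarantees that $\infty_1$ lies strictly off every facet-defining hyperplane (each of the form $\sum_{j \in I_e} x_j = c_e$ by Lemma~\ref{lem:temp}), so that the positive/negative dichotomy is unambiguous for all cells at once and the displacement direction can be taken transverse to the interior walls. Once that general-position input is in place, the remaining verifications are the routine supporting-hyperplane and disjoint-interior facts used above.
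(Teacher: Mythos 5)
Your proposal is correct and follows essentially the same route as the paper, which likewise assembles Lemma~\ref{lem:sign}, the adjacent-cell sign observation, and Lemma~\ref{lem:shift}, leaving the tiling claim largely as a remark; your perturbation away from $\infty_1$ simply makes that remark rigorous. One small caveat: in your final case, $H \cap \prod_T$ need not be a facet of $\prod_T$, so ``$x$ sits on a positive facet of each such cell'' is not immediate --- but your own perturbation already closes this, since the displaced point crosses $H$ to the non-$\infty_1$ side and hence exits $P_G$, so $x$ cannot be good in any cell.
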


An example of a semi-polytope decomposition of the generalized permutohedron considered in Figure~\ref{fig:ex1} and in Figure~\ref{fig:ex2} is given in Figure~\ref{fig:ex3}.

\section{Application to Erhart theory}

In this section we show how the semi-polytope decomposition can be used in Erhart theory as guided in \cite{Postnikov01012009}. Given any subgraph $T$ in $G$, define the \newword{left degree vector} $ld(T)=(d_1-1,\cdots,d_n-1)$ and the \newword{right degree vector} $rd(T) = (d_1'-1,\cdots,d_m'-1)$ where $d_i$ and $d_j'$ are the degree of the vertex $i$ and $\bar{j}$ respectively. The raising powers are defined as $(y)_a := y(y+1)\cdots(y+a-1)$ for $a \geq 1$ and $(y)_0 :=1$.

\begin{corollary}
\label{cor:main2}

Fix a fine mixed subdivision of $P_G(y_1,\ldots,y_m)$ where $y_i$'s are nonnegative integers. The number of lattice points in the trimmed generalized permutohedron $P_G^{-}(y_1,\ldots,y_m)$ equals $\sum_{(a_1,\ldots,a_m)} \prod_i \frac{(y_i)_{a_i}}{a_i!}$ where the sum is over all left-degree vectors of fine mixed cells inside the subdivision.

\end{corollary}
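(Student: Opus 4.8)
The plan is to combine the semi-polytope decomposition of Theorem~\ref{thm:spdecomp} with the product structure of lattice points from Proposition~\ref{prop:ptsum}, reducing the problem to a product of elementary simplex lattice-point counts. By Theorem~\ref{thm:spdecomp} the map $p\mapsto p+e_1$ identifies $P_G^{-}\cap\ZZ^n$ with the good lattice points of $P_G$, and these are partitioned (disjointly) by the semi-polytopes $\sum_i y_i\Delta^{*}_{T_i,t_i}$ as $\prod_{T}$ runs over the cells of the fixed subdivision. Hence
\[
\bigl|P_G^{-}(y_1,\ldots,y_m)\cap\ZZ^n\bigr|=\sum_{T}\Bigl|\bigl(\textstyle\sum_i y_i\Delta^{*}_{T_i,t_i}\bigr)\cap\ZZ^n\Bigr|,
\]
where $T$ ranges over the spanning trees indexing the cells, and it remains to count the lattice points of each semi-polytope and to match the result term by term.

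For a fixed cell $\prod_{T}$ I would show the count factors over the left vertices. Proposition~\ref{prop:ptsum} writes every lattice point of $\prod_{T}$ uniquely as $p_1+\cdots+p_m$ with $p_i\in y_i\Delta_{T_i}\cap\ZZ^n$. By Lemma~\ref{lem:temp} every positive facet is $\prod_{T\setminus e}$ for an edge $e=(i,t_i)$, and a lattice point lies on it precisely when its $i$-th summand satisfies $p_i\in y_i\Delta_{T_i\setminus t_i}$, i.e. the $t_i$-coordinate of $p_i$ vanishes. Thus a lattice point is good if and only if $p_i\in y_i\Delta^{*}_{T_i,t_i}$ for every $i$, and uniqueness upgrades this to a bijection with the product of the per-factor sets, giving
\[
\Bigl|\bigl(\textstyle\sum_i y_i\Delta^{*}_{T_i,t_i}\bigr)\cap\ZZ^n\Bigr|=\prod_i\bigl|y_i\Delta^{*}_{T_i,t_i}\cap\ZZ^n\bigr|.
\]

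Each factor is then a routine inclusion-exclusion: the lattice points of $y_i\Delta_{T_i}$ number $\binom{y_i+|T_i|-1}{|T_i|-1}$, those on the removed face $y_i\Delta_{T_i\setminus t_i}$ number $\binom{y_i+|T_i|-2}{|T_i|-2}$, and Pascal's rule gives the difference $\binom{y_i+|T_i|-2}{|T_i|-1}$. Writing $a_i=|T_i|-1$ for the $i$-th entry of the left-degree vector of $T$, the identity $\binom{y_i+a_i-1}{a_i}=\frac{(y_i)_{a_i}}{a_i!}$ rewrites this as $\frac{(y_i)_{a_i}}{a_i!}$; taking the difference of two honest simplex counts, rather than substituting a strict inequality, makes the formula hold uniformly, including the leaf case $|T_i|=1$ (where no face is removed) and any zero weights. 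Multiplying over $i$ yields $\prod_i\frac{(y_i)_{a_i}}{a_i!}$ for the cell $\prod_{T}$, and summing over all cells --- equivalently over their left-degree vectors counted with multiplicity --- produces the claimed formula.

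The main obstacle is the factorization in the second paragraph: one must verify that being a good point is exactly a coordinatewise condition on the Minkowski summands. This is precisely where Lemma~\ref{lem:temp}, identifying the hyperplane supporting each positive facet, has to be threaded through the unique decomposition of Proposition~\ref{prop:ptsum}, so that the positive-facet condition translates into ``$p_i$ avoids the $t_i$-coordinate-zero face'' independently in each factor and no lattice point is counted twice. Once this coordinatewise reading is established, the remaining stars-and-bars count and the binomial-to-rising-factorial identity are mechanical.
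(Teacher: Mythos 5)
Your proof is correct and takes essentially the same route as the paper: apply Theorem~\ref{thm:spdecomp} to reduce to counting lattice points in each semi-polytope $\sum_i y_i\Delta^{*}_{T_i,t_i}$, use the uniqueness in Proposition~\ref{prop:ptsum} to factor that count over the left vertices, and evaluate each factor as $\frac{(y_i)_{a_i}}{a_i!}$ with $a_i=|T_i|-1$. The details you spell out --- the coordinatewise reading of goodness via Lemma~\ref{lem:temp} and the binomial/Pascal computation --- are precisely what the paper leaves implicit in its appeal to Lemma~\ref{lem:sign} and Proposition~\ref{prop:ptsum}.
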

\begin{proof}

%Fix a fine mixed subdivision of $P_G(y_1,\ldots,y_m)$ where $y_i$'s are nonnegative integers. 
Obtain a semi-polytope decomposition as in Theorem~\ref{thm:spdecomp}. Then each lattice point of $P_G^{-}(y_1,\ldots,y_m)$ is in exactly one semi-polytope. The claim follows since the number of lattice points of a semi-polytope $y_1 \Delta_{T_1,t_1}^* + \cdots + y_m \Delta_{T_m,t_m}^*$ (thanks to Proposition~\ref{prop:ptsum}, different sum gives a different point) is given by $\prod_i \frac{(y_i)_{|T_i|-1}}{(|T_i|-1)!}$.

\end{proof}

The expression in Corollary~\ref{cor:main2} is called the \newword{Generalized Erhart polynomial} of $P_G^{-}(y_1,\ldots,y_m)$ by \cite{Postnikov01012009}. As foretold in \cite{Postnikov01012009}, Theorem~\ref{thm:spdecomp} gives us a pure counting proof of Theorem 11.3 of \cite{Postnikov01012009}.

\begin{definition}[\cite{Postnikov01012009}, Definition 9.2]
Let us say that a sequence of nonnegative integers $(a_1,\cdots,a_m)$ is a $G$-\newword{draconian sequence} if $\sum a_i = n-1$ and, for any subset $\{i_1 < \cdots < i_k\} \subseteq [m]$, we have $|I_{i_1} \cup \cdots \cup I_{i_k}| \geq a_{i_1} + \cdots + a_{i_k}+1$. %Equivalently, if the sequence $I_1^{a_1},\cdots,I_m^{a_m}$ satisfies the dragon marriage condition.
\end{definition}

\begin{theorem}[\cite{Postnikov01012009}, Theorem 11.3]
\label{thm:main}
For nonnegative integers $y_1,\ldots,y_m$, the number of lattice points in the trimmed generalized permutohedron $P_G^{-}(y_1,\ldots,y_m)$ equals $\sum_{(a_1,\ldots,a_m)} \prod_i \frac{(y_i)_{a_i}}{a_i!}$, where the sum is over all $G$-draconian sequences $(a_1,\ldots,a_m)$.
\end{theorem}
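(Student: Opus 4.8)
The plan is to derive Theorem~\ref{thm:main} from Corollary~\ref{cor:main2} by identifying the index sets of the two sums. Fix a fine mixed subdivision of $P_G(y_1,\ldots,y_m)$. By Corollary~\ref{cor:main2} the number of lattice points of $P_G^-$ equals $\sum_T \prod_i \frac{(y_i)_{|T_i|-1}}{(|T_i|-1)!}$, where the sum runs over the cells $\prod_T$, i.e.\ over the spanning trees $T$ realized in the subdivision, and each cell contributes the vector $a(T)=(|T_1|-1,\ldots,|T_m|-1)$. Thus it suffices to prove that $T\mapsto a(T)$ is a bijection from the cells of the subdivision onto the set of $G$-draconian sequences; term by term the two expressions then coincide, since the summand $\prod_i \frac{(y_i)_{a_i}}{a_i!}$ is the same in both.

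First I would check that every $a(T)$ is $G$-draconian. As a spanning tree of a graph on $m+n$ vertices, $T$ has $m+n-1$ edges, and since each edge has a unique left endpoint, $\sum_i |T_i| = m+n-1$, whence $\sum_i(|T_i|-1)=n-1$, the first draconian condition. For the inequalities, fix $S=\{i_1,\ldots,i_k\}\subseteq[m]$ and consider the subforest $F$ of $T$ formed by all edges incident to $S$; its vertex set is $S\cup N$ with $N=\bigcup_{i\in S}T_i$, and it has $\sum_{i\in S}|T_i|$ edges. Because a forest satisfies $|E(F)|\le|V(F)|-1$, we get $\sum_{i\in S}|T_i|\le k+|N|-1$, which rearranges to $|N|\ge \sum_{i\in S}(|T_i|-1)+1$. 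Since $T_i\subseteq I_i$ gives $N\subseteq\bigcup_{i\in S}I_i$, this yields $|\bigcup_{i\in S}I_i|\ge \sum_{i\in S}a_i+1$, exactly the draconian inequality. Hence the sum in Corollary~\ref{cor:main2} is supported on draconian sequences.

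The main obstacle is the converse: that $T\mapsto a(T)$ is a bijection, equivalently that each draconian sequence is the type of exactly one cell. I would first observe that the polynomials $f_a(y)=\prod_i \frac{(y_i)_{a_i}}{a_i!}=\prod_i\binom{y_i+a_i-1}{a_i}$ are linearly independent, so the multiplicities $M_a=\#\{T:a(T)=a\}$ in the expansion $N(y)=\sum_a M_a f_a(y)$ are determined by $N(y)$ alone and are therefore independent of the chosen subdivision; combined with the previous paragraph this reduces the theorem to showing $M_a=1$ for every draconian $a$. For existence ($M_a\ge 1$) I would use a Hall-type (``dragon marriage'') argument: the draconian inequalities are precisely the condition guaranteeing a spanning tree of $G$ with prescribed left-degrees $|T_i|=a_i+1$, and such a tree can be made to appear as a cell of a suitably chosen subdivision. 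The delicate point is uniqueness, $M_a\le 1$; here I would exhibit one explicit fine mixed subdivision---say the regular one induced by a generic height function on the Cayley configuration of the $\Delta_{I_i}$---and construct a direct bijection between its cells and the draconian sequences, with genericity forcing the minimal tree of each type to be unique. Setting up this last bijection, rather than the routine forest count of the forward direction, is where the real work lies.
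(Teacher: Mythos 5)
Your reduction is the same as the paper's: by Corollary~\ref{cor:main2} everything comes down to showing that the left-degree vectors of the cells of a fine mixed subdivision are exactly the $G$-draconian sequences, each occurring once. Your forward inclusion is correct and complete: the edge count $\sum_i|T_i|=m+n-1$ gives $\sum_i a_i=n-1$, and the forest inequality $|E(F)|\le|V(F)|-1$ applied to the edges with left endpoint in $S$ gives the draconian inequalities. The observation that the polynomials $\prod_i\binom{y_i+a_i-1}{a_i}$ are linearly independent, so that the multiplicities $M_a$ are independent of the chosen subdivision, is also sound and is a nice touch not present in the paper.

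The genuine gap is the converse, and you flag it yourself: you never actually prove that every draconian sequence is the left-degree vector of a cell ($M_a\ge1$) nor that it occurs for at most one cell ($M_a\le1$). The "dragon marriage" remark only produces a spanning tree of $G$ with the prescribed left degrees; it does not show that such a tree appears as a cell of the \emph{given} (or even of \emph{some}) fine mixed subdivision, and not every spanning tree of $G$ is a cell of a subdivision. The proposed uniqueness argument ("exhibit one explicit regular subdivision and construct a direct bijection, with genericity forcing uniqueness") is a plan, not a proof --- and it is precisely the content of the hard combinatorial input. The paper closes this gap by citing two results of Postnikov: Lemma 14.9 (the degree vectors of the fine cells are exactly the lattice points of $P_{G^*}^-(1,\ldots,1)$, where $G^*$ swaps left and right vertices) and Lemma 11.7 (those lattice points are exactly the $G$-draconian sequences). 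If you want a self-contained argument you must either reprove those two lemmas or carry out the bijection you sketch; as written, the central claim of the theorem is assumed rather than established.
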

\begin{proof}
Thanks to Corollary~\ref{cor:main2}, all we need to do is show that the set of $G$-draconian sequences is exactly the set of left-degree vectors of the cells inside a fine mixed subdivision of $P_G$. Lemma 14.9 of \cite{Postnikov01012009} tells us that the right degree vectors of the fine cells is exactly the set of lattice points of $P_{G^*}^-(1,\ldots,1)$ where $G^*$ is obtained from $G$ by switching left and right vertices. Then Lemma 11.7 of \cite{Postnikov01012009} tells us that the set of $G$-draconian sequences is exactly the set of lattice points of $P_{G^*}^-(1,\ldots,1)$. 
\end{proof}

This approach has an advantage that the generalized Erhart polynomial is obtained from a direct counting method, without using any comparison of formulas.

\bibliographystyle{plain}    % (uses file "plain.bst")
\bibliography{tom}        % expects file "myrefs.bib"

\begin{thebibliography}{1}

\bibitem{BBY}
S.~Backman, M.~Baker, and C.~Yuen.
\newblock {Geometric Bijections for Regular Matroids, Zonotopes, and Ehrhart
  Theory}.
\newblock {\em ArXiv Mathematics e-prints}, (2017).

\bibitem{Oh-hvec}
S.~Oh.
\newblock Generalized permutohedra, h-vectors of cotransversal matroids and
  pure o-sequences.
\newblock {\em Electronic Journal of Combinatorics}, 20(3):P14, 2013.

\bibitem{Postnikov01012009}
A.~Postnikov.
\newblock {Permutohedra, Associahedra, and Beyond}.
\newblock {\em International Mathematics Research Notices}, 2009(6):1026--1106,
  2009.

\end{thebibliography}
 
\end{document}